\newcommand{\ej}[1]{#1}
\newcommand{\ejj}[1]{#1}
\newcommand{\er}[1]{#1}
\newcommand{\err}[1]{#1}
\newcommand{\jk}[1]{#1}
\newcommand{\jkk}[1]{#1}
\newcommand{\gm}[1]{#1}
\title{Sylvester-based preconditioning for the waveguide eigenvalue problem}
\author{Emil Ringh$^*$, Giampaolo Mele$^*$, Johan Karlsson$^*$, Elias Jarlebring\thanks{Dept. Mathematics, KTH Royal Institute of Technology, Lindstedtsvägen 25, Stockholm, Sweden, email: \{eringh, gmele, eliasj\}@kth.se, johan.karlsson@math.kth.se }}
\date{}
\begin{document}


\maketitle
\begin{abstract}
We \ej{consider a} nonlinear eigenvalue problem (NEP) arising from absorbing boundary
conditions in the study of a partial differential equation (PDE) describing a waveguide. 
We propose a new computational approach for this 
\gm{large scale}
NEP based on residual inverse iteration (Resinv) with preconditioned iterative solves.
\ej{Similar to many preconditioned iterative methods for discretized
  PDEs, this approach requires the construction of  }an accurate and efficient preconditioner. 
For the waveguide eigenvalue problem, 
the associated linear system \ej{ can be }formulated as a generalized Sylvester equation. The equation is approximated by a 
\gm{
low--rank correction} 
of a 
Sylvester equation, which we use as a preconditioner. 
The action of the preconditioner is efficiently computed 
using the matrix equation version of the Sherman-Morrison-Woodbury (SMW) formula.
\ej{We show how the preconditioner can be integrated into Resinv.}
  The results are illustrated by applying the method
to large-scale \ej{problems.}

\end{abstract}
\section{Introduction}\label{sec:intro}
We are concerned with the study of propagation of waves in a waveguide.
The application of two well established techniques
(Floquet theory and absorbing boundary conditions) leads to the following
characterization of wave propagation in $\RR^2$.
Details of such a derivation can be found\gm{, e.g.,} in \cite{Jarlebring:2015:WTIARTR, Tausch:2000:WAVEGUIDE}. 

The characterization is described by a PDE on a rectangular domain
$S_0=[x_-,x_+]\times [0,1]$. More precisely,
we \ej{wish to compute}
$u:S_0\rightarrow \CC$ and $\gamma\in\CC$ such that 
\refstepcounter{equation}
\label{eq:pde}\begin{align}
\Delta u(x,z) + 2\gamma u_z(x,z) + (\gamma^2+\kappa^2(x,z))u(x,z) &= 0 && (x,z)\in S_0\tag{\theequation a} \label{eq:pde_a} \\
u(x,0) &= u(x,1) && x\in(x_{-},x_{+})\tag{\theequation b} \label{eq:pde_b} \\
u_z(x,0) &= u_z(x,1) && x\in(x_{-},x_{+})\tag{\theequation c} \label{eq:pde_c} \\
\Top_{-,\gamma}[u(x_{-},\cdot)](z) &= -u_x(x_{-},z) && z\in(0,1) \tag{\theequation d} \label{eq:pde_d} \\
\Top_{+,\gamma}[u(x_{+},\cdot)](z) &= u_x(x_{+},z) && z\in(0,1) \tag{\theequation e} \label{eq:pde_e}.
\end{align}
The operators $\Top_{-,\gamma}$ and $\Top_{+,\gamma}$ are the so-called Dirichlet-to-Neumann (DtN) \er{maps},
which we specify in Section~\ref{sec:preliminaries}. The spatially dependent
constant $\kappa(x,z)$ is the wavenumber, \jk{which in our work is} 
assumed to be piecewise constant. A benchmark example is illustrated in Figure~\ref{fig:wg_challenge}.

\begin{figure}[t]
  \begin{center}
      \subfigure[Geometry of a benchmark waveguide.]{\includegraphics{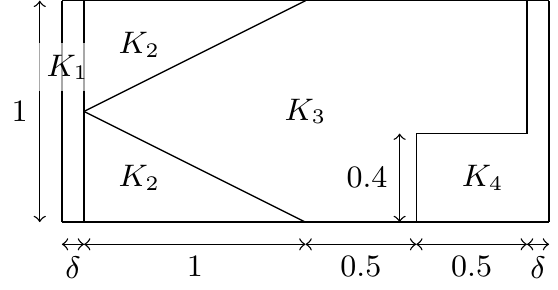}}
    \subfigure[The absolute value of an eigenfunction.]
    {\includegraphics{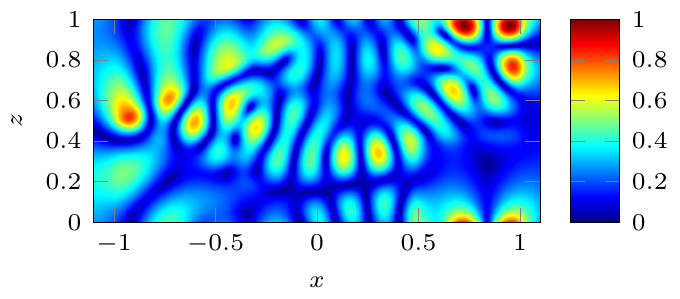}}
    \caption{
      Geometry of the benchmark waveguide and an eigenfunction corresponding to the eigenvalue $\gamma\approx -1.341 - 1.861i$. The same waveguide is used in the numerical examples, Section~\ref{sec:numerics}. The values $K_i$ indicates regions where the wavenumber $\kappa(x,z)$ is constant. For this waveguide $K_1 = \sqrt{2.3}\pi$, $K_2 = 2\sqrt{3}\pi$, $K_3 = 4\sqrt{3}\pi$, $K_4 = \pi$, and  $\delta=0.1$.
      \label{fig:wg_challenge}
    }
  \end{center}
\end{figure}

Note that \eqref{eq:pde} is a PDE-eigenvalue problem, where the eigenvalue 
$\gamma$
appears in a nonlinear way in the operator as well as \gm{in} the boundary conditions, due
to the $\gamma$-dependence of the DtNs. \ej{The problem \eqref{eq:pde} 
will be referred to as 
the waveguide eigenvalue problem (WEP)} and \gm{we} discretize
\ej{this} PDE in a way that allows us to construct an efficient iterative
procedure. More precisely, we derive results and methods with
a uniform finite-difference (FD) discretization, and also investigate its use in combination
with a finite-element method 
(FEM) discretization.
The \ej{discretization} 
is presented in Section~\ref{sec:discretization}.
Due to the nonlinearity in the PDE-eigenvalue problem, the
discretized problem is a nonlinear eigenvalue problem (NEP)
\ej{of} the form: find $(\gamma,v)\in\CC\times\CC^{n_zn_x+2n_z}\backslash\{0\}$ such that 
\begin{equation}  \label{eq:WEP}
M(\gamma)v = 0,
\end{equation}
where $n_x$ and $n_z$ \gm{are} the \ej{number of discretization} points in $x$- and $z$-direction
respectively. 

Over the last decades, the NEP has been considerably studied in the
numerical linear algebra community, and there is \er{a}
large family of different numerical methods, which \jk{we briefly summarize} 
as follows. A number of methods can be seen
as flavors of Newton's method, e.g.,
block Newton methods \cite{Kressner:2009:BLOCKNEWTON},
generalizations of inverse iteration \cite{Neumaier:1985:RESINV,Ruhe:1973:NLEVP}
and generalizations of the Jacobi-Davidson method. 
\ej{See PhD thesis \cite{Schreiber:2008:PHD} for a summary of these methods.}
A number of approaches are based on numerically computing
a contour integral \cite{Asakura:2009:NUMERICAL,Beyn:2011:INTEGRAL},
which can be accelerated as described in \cite{Xiao2016} and references therein.
Krylov methods and rational Krylov methods have
been generalized in various ways, e.g., the Arnoldi based methods
\cite{Voss:2004:ARNOLDI,Jarlebring:2012:INFARNOLDI},
rational Krylov approaches
\cite{Guttel:2014:NLEIGS,VanBeeumen:2013:RATIONAL,VanBeeumen:2015:CORK}.
See the summary papers \cite{Mehrmann:2004:NLEVP,Ruhe:1973:NLEVP,Voss:2013:NEPCHAPTER}
and \jk{the} benchmark collection \cite{Betcke:2013:NLEVPCOLL}
for further literature on methods for nonlinear eigenvalue problems.

\ejj{Most} \gm{of these methods involve}
the solution to
the associated linear system of equations
\begin{equation}  \label{eq:linsys}
  M(\sigma)y=r.
\end{equation}
For large-scale problems, the solution to this linear system
is often restricting the applicability of the method. 
In this work we adapt the method called residual inverse iteration
(Resinv) which was developed in \cite{Neumaier:1985:RESINV}.
Resinv is an iterative method
\gm{
for computing the  \ej{eigenvalue} closest to a given shift $\sigma \in \CC$
}
\ej{and it}
has the attractive feature that
\gm{the shift}
$\sigma$ 
is kept constant throughout
the iterations. 

The constant \er{shift allows} for precomputation, which 
\ej{reduces the computational effort for solving}
the linear systems  \eqref{eq:linsys}.
The standard \ej{way to exploit this} is to pre-compute 
\gm{ the}
LU-factorization of $M(\sigma)$.
\gm{
  Unfortunately this is not effective for \ej{our} large--scale
  problem,} \ejj{due to memory requirements.}
\ej{Instead,}
we propose to solve \eqref{eq:linsys}
with a preconditioned iterative method such as
GMRES \cite{Saad:1986:GMRES} or BiCGStab \cite{Vorst:1992:BICGSTAB},
where the constant shift and the structure of $M(\sigma)$
allows us to carry \jk{out} substantial precomputations \jk{in the}
initialization of Resinv. 

A number of recent approaches exploit 
that \ej{a} uniform discretization of \ej{a} rectangular domain
\er{PDE} can be \ej{expressed as a matrix equation, e.g., using
the  Sylvester equations or Lyapunov equations}.
The matrix-equation approach has been used, e.g., in the
setting of convection-diffusion equations \cite{Palitta:2015:MATRIX},
\er{fractional differential} equations \cite{Stoll2015},
PDE-constrained optimization \cite{Stoll2016},
and 
stochastic differential equations \cite{Powell:2015:BASIS}.
\jk{
Inspired by this, we propose a new preconditioner for the WEP 
based on \err{matrix equations}.
As a first step, shown in Proposition \ref{prop:Schur} and Section~\ref{sec:mateq},
the
\ej{linear system of equations} \eqref{eq:linsys} is formulated as a matrix equation. In Section~\ref{sec:SMW}, 
this matrix equation is approximated by} 
\gm{
a low-rank correction of a Sylvester equation, }
\jk{i.e.,  \ej{of} the form }
\begin{equation}  \label{eq:gensylv}
  \LLL(X)+\Pi(X)=C
\end{equation}
where $\LLL$ is a Sylvester operator and $\Pi$ is a \err{low-rank} linear operator
of the form $ \Pi(X):=\sum_{k=1}^N \Wop_k(X)E_k$, 
and \jk{where} $\Wop_k:\CC^{n\times n}\rightarrow\CC$, $k=1,\ldots,N$  are linear functionals.
We use a matrix equation version of the Sherman-Morrison-Woodbury (SMW) formula
to solve \eqref{eq:gensylv}. 
\jk{In Section~\ref{sec:precond} we describe how this can be done in a fast and memory efficient manner using the structures in our problem.}
A dominating part of the computation is independent
of $C$ and \jk{can therefore} 
be precomputed.
Properties of the approach are illustrated in Section~\ref{sec:numerics},
where we also compare the performance with other approaches.

The following notation is adopted in this paper.
We let $A \circ B$ denote the Hadamard,
or element-wise, matrix product between $A$ and $B$, and $A \kron B$ denotes the Kronecker product.
We let $\vec(A)\in\CC^{nm}$ denote the vectorization
of $A\in\CC^{n\times m}$, i.e.,  the vector obtained by
stacking the columns of $A$ on top of each other.
The set of eigenvalues of the matrix $A$ is denoted $\eig(A)$.
The $n\times n$ identity matrix is denoted  $I_n$. The
the matrix $J_n\in\RR^{n\times n}$ denotes the flipped identity matrix\gm{, that is $[J_{n}]_{k,\ell}=1$ if $k=n-\ell+1$ and 0 otherwise}.
The \jk{column vector} 
consisting of ones is denoted \jk{by} $\mathbf{1}$.

\section{Background and preliminaries}\label{sec:preliminaries}
\subsection{Problem background}
\ej{The PDE \eqref{eq:pde} stems from the propagation of
  waves in a periodic medium. The derivation can be briefly summarized as follows.
See \cite{Jarlebring:2015:WTIARTR, Tausch:2000:WAVEGUIDE} for details.}

Consider \ej{Helmholtz's} equation 
\begin{align}\label{eq:Helmhotz}
\Delta v(x,z) + \kappa(x,z)^2v(x,z) &= 0 && (x,z)\in\RR^2,
\end{align}
where 
\jk{
$\kappa(x,z)\in L^\infty(\RR^2)$ is the wavenumber. The wavenumber is a $1$-periodic function in the $z$-direction which is constant for sufficiently large $|x|$, i.e., $\kappa(x,z+1) = \kappa(x,z)$ for all $x,z$, and there exists real numbers $\xi_{-}$ and $\xi_{+}$ such that $\kappa(x,z) = \kappa_{-}$ for $x<\xi_{-}$ and $\kappa(x,z) = \kappa_{+}$ for $x>\xi_{+}$. }
\gm{We use}
\ej{the ansatz}
\begin{align*}
v(x,z) = e^{\gamma z}u(x,z)
\end{align*}
where $u(x,z+1) = u(x,z)$ in \eqref{eq:Helmhotz} and 
\gm{ we apply}
absorbing boundary conditions at $x=x_-\leq\xi_-$ and $x=x_+\geq\xi_+$. 
\ej{From this ansatz we} directly identify that $u$ satisfies \eqref{eq:pde_a}.
A more precise analysis (presented in \cite{Jarlebring:2015:WTIARTR}) shows
that also \eqref{eq:pde_d}-\eqref{eq:pde_e}
are satisfied where the DtN-maps are defined by
\begin{align}\label{eq:dtn}
\Top_{\pm,\gamma}[g](z) := \sum_{k\in\ZZ} s_{\pm,k}(\gamma)g_k e^{2\pi i k z},
\end{align}
where $\{g_k\}_{k\in\ZZ}$ are
the Fourier series coefficients of the function $g(z)$ and $s_k$, $k\in\ZZ$\er{,} are given by 
\refstepcounter{equation}
\label{eq:dtn_coef}
\begin{align}
s_{\pm,k}(\gamma) & := \sign(\im(\beta_{\pm,k}(\gamma))) i \sqrt{\beta_{\pm,k}(\gamma)} \tag{\theequation a} \label{eq:dtn_coef_a}\\
\beta_{\pm,k}(\gamma) & := (\gamma + 2\pi i k)^2 + \kappa^2_\pm \tag{\theequation b} \label{eq:dtn_coef_b}.
\end{align}

\subsection{Discretization of the WEP}\label{sec:discretization}
The PDE \eqref{eq:pde} is in this work discretized as follows.
We use a uniform \ej{FD} discretization with $n_x$ and $n_z$ \gm{points} in $x$- and $z$-direction respectively. The grid consists of the points 
\jk{$x_k = x_- + kh_x$ for $k=1,2,\dots n_x$ where $h_x=(x_+ - x_-)/(n_x+1)$,
and $z_\ell = \ell h_z$ for  $\ell=1,2,\dots,n_z$ where $h_z=1/n_z$.}

\ej{This FD-discretization leads to the NEP} \er{\eqref{eq:WEP}} \jk{being} described by the following block matrix
\begin{align}\label{eq:M_FD}
M(\gamma) := \begin{pmatrix}
Q(\gamma) & C_1 \\
C_2^\trans & P(\gamma)
\end{pmatrix}.
\end{align}
\jk{Here $Q(\gamma)$ represents the discretization of the interior \eqref{eq:pde_a} and $P(\gamma)$ represents the Dirichlet-to-Neumann maps, \eqref{eq:pde_d} and \eqref{eq:pde_e}. The matrix $C_1$ represents the effect of the
boundary points to the interior and $C_2^\trans$ represents the effect of the
interior on the boundary constraints, i.e., \eqref{eq:pde_d} and \eqref{eq:pde_e}.}
\jk{The matrix}
$Q(\gamma)\in\CC^{n_xn_z\times n_xn_z}$ is 
\ej{large}
and sparse, \ej{and}  given by
\begin{align}\label{eq:Q_FD}
Q(\gamma) := A_0 + \gamma A_1 + \gamma^2 A_2,
\end{align}
with $A_0 := D_{xx}^\trans \kron I_{n_z} + I_{n_x}\kron D_{zz} + \diag(\vect(K))$, and $A_1 := 2 I_{n_x}\kron D_{z}$, and $A_2 :=I_{n_x n_z}$.
\jkk{Here $D_{xx}\in \RR^{n_x\times n_x}$ is the second derivative matrix, and $D_{z}, D_{zz}\in \RR^{n_z\times n_z}$ are the circulant first and second derivative matrices. 
That is, $D_{xx}=(-2I_{n_x}+Z_{n_x}+Z_{n_x}^T)/h_x^2$, $D_{z}=(Z_{n_z}+e_1e_{n_z}^T-Z_{n_z}^T-e_{n_z}e_{1}^T)/(2h_z)$, and $D_{zz}=(-2I_{n_z}+Z_{n_z}+e_1e_{n_z}^T+Z_{n_z}^T+e_{n_z}e_{1}^T)/h_z^2$,
where $Z_n\in \RR^{n\times n}$ is the shift matrix, defined by $[Z_n]_{k,l}=1$ if $k-l=1$ and $0$ otherwise.
}

The matrix $K$ \jkk{is} the discretization of the \er{squared wavenumber,
i.e.,  $[K]_{k,\ell} := \kappa^2(x_\ell,z_k)$.} 
The block $C_1\in\CC^{n_xn_z\times 2n_z}$ is given by
\begin{align}\label{eq:C1_FD}
C_1 := \frac{1}{h_x^2}\left(
e_1\kron I_{n_z},\
e_{n_x}\kron I_{n_z}
\right) ,
\end{align}
and the block $C_2^\trans \in\CC^{2n_z\times n_xn_z}$ \ej{is given by}
\begin{align}\label{eq:C2_FD}
C_2^\trans := \begin{pmatrix}
d_1 e_{1}^\trans\kron I_{n_z} +d_2 e_{2}^\trans\kron I_{n_z} \\
d_1 e_{n_x}^\trans\kron I_{n_z} +d_2 e_{n_x-1}^\trans\kron I_{n_z}
\end{pmatrix} ,
\end{align}
with $d_1 := \frac{2}{h_x}$ $d_2 := -\frac{1}{2h_x}$.
The last block $P(\gamma)\in\CC^{2n_z\times2n_z}$ stems from the discretization of the DtN operators,
i.e.,  the operators in the left hand side of \eqref{eq:pde_d} and \eqref{eq:pde_e}. We truncate the Fourier series expansion in \eqref{eq:dtn} and use only the \ej{coefficients corresponding to  
$-p,\ldots,p$}, 
we choose $p$ such that $n_z = 2p +1$. Then
\begin{align}\label{eq:P}
P(\gamma)
:= \begin{pmatrix}
P_{-}(\gamma) & 0\\
0 & P_{+}(\gamma)
\end{pmatrix}
= \begin{pmatrix}
R\Lambda_-\ej{(\gamma)}R^{-1} & 0\\
0 & R\Lambda_+\ej{(\gamma)}R^{-1}
\end{pmatrix}
\end{align}
with $\Lambda_\pm := \diag(S_\pm)$, where $S_\pm := \left( s_{\pm,-p} + d_0,\ s_{\pm,-p+1} + d_0,\ \dots,\ s_{\pm,p} + d_0 \right)$, and $d_0 := -\frac{3}{2h_x}$, and where $s_{\pm,k}$ are defined by \eqref{eq:dtn_coef_a}. Moreover,
$[R]_{k,\ell}= e^{2\pi i(\ell-p-1)kh_z}$
%
and $R^\herm = n_z R^{-1}$.
\begin{remark}\label{rem:action_of_P}
  \ej{Note} that $R$ is the Fourier matrix left multiplied with the anti-diagonal matrix with $(1,e^{2\pi iph_z},e^{2\pi i2ph_z},\dots,e^{2\pi (n_z-1)ph_z})$ on its anti-diagonal. \er{Consequently, the} action of both $R$ and $R^{-1}$ on a vector can be efficiently calculated with the Fast Fourier Transform \er{(FFT)
  and} from \eqref{eq:P} we conclude 
  that calculating the action of $P_-(\gamma)$, $P_-(\gamma)^{-1}$, $P_+(\gamma)$, and $P_+(\gamma)^{-1}$ on a vector can be done in $\OOO(n_z\log(n_z))$ operations. 
\end{remark}

\ej{For future reference, we now also note} that when $\gamma$ is in the left half-plane of $\CC$ the derivative of $M(\gamma)$ with respect to $\gamma$ is given by 
\begin{align}\label{eq:M_prime_FD}
M'(\gamma) := \begin{pmatrix}
Q'(\gamma) & 0\\ 0 & P'(\gamma)
\end{pmatrix},
\end{align}
where $Q'(\gamma) := A_1 + 2\gamma A_2$ and
\jk{
$P'(\gamma)=\diag(R\Lambda_-'\ej{(\gamma)}R^{-1}, R\Lambda_+'\ej{(\gamma)}R^{-1}).$
The matrices are directly given by}
$\Lambda_\pm' := \diag(S_\pm')$, where $S_\pm' := \left( s_{\pm,-p}' ,\ s_{\pm,-p+1}',\ \dots,\ s_{\pm,p}' \right)$,
\jk{$s'_{\pm,k}(\gamma)  := \sign(\im(\beta_{\pm,k}(\gamma))) i (\gamma+2\pi i k)/ \sqrt{\beta_{\pm,k}(\gamma)}$,}
and $\beta_{\pm,k}(\gamma)$ are given by \eqref{eq:dtn_coef_b}.

\subsection{Residual inverse iteration for the WEP}\label{sec:resinv}
Our approach is based on the Resinv \cite{Neumaier:1985:RESINV}
as a solution method for the NEP \eqref{eq:WEP} with $M$ defined
by \eqref{eq:M_FD}.
Given an approximation to the eigenpair \ej{$(\gamma_k,v_k)$,}
  Resinv iteratively computes new approximations in each iteration. The procedure consists of a few steps. The
first step \jk{is to compute} 
a new approximation of the eigenvalue $\gamma_{k+1}$ by solving the nonlinear scalar equation
\begin{align}  \label{eq:Rayleigh_quot}
 v_{k}^\herm M(\gamma_{k+1})v_k = 0.
\end{align}
There are different ways of choosing the left vector in \eqref{eq:Rayleigh_quot} discussed in the literature \cite{Neumaier:1985:RESINV,Jarlebring:2011:RESINVCONV,Schreiber:2008:PHD}, but we choose the current approximation of the right eigenvector, as it is presented in the equation.
\ej{Equation \eqref{eq:Rayleigh_quot} is solved with} \er{Newton's method}
\ej{in one unknown variable}
which requires that we calculate the derivative of $\ej{v_k^*}M(\gamma)\ej{v_k}$
with respect to $\gamma$. The derivative, for  $\gamma$ in the left half-plane of $\CC$,
\ej{can be computed from} \eqref{eq:M_prime_FD}.
The second step consists of computing a residual 
\begin{align}\label{eq:residual_resinv}
  r_{k}=M(\gamma_{k+1})v_k.
\end{align}
Subsequently $r_k$ is used to calculate a correction to the eigenvector by solving
\begin{align}  \label{eq:Delta_vk}
  \Delta v_{k}=M(\sigma)^{-1}r_k,
\end{align}
where  $\sigma$ is a fixed shift that is used throughout the whole procedure. We propose to use a preconditioned iterative method to \er{solve \eqref{eq:Delta_vk}.} The third and last step is to update the eigenvector approximation $v_{k+1}=v_{k}-\Delta v_k$, and to normalize it $v_{k+1} = v_{k+1}/\norm{v_{k+1}}$. The Resinv procedure is summarized in Algorithm \ref{alg:resinv}.
\begin{algorithm} 
\caption{Resinv with preconditioned iterative solves}\label{alg:resinv}
\SetKwInOut{Input}{input}\SetKwInOut{Output}{output}
\Input{Initial guess of the eigenpair $(\gamma_0,v_0)\in\CC\times\CC^{n_xn_z+2n_z}$, with $\norm{v_0} = 1$}
\Output{An approximation $(\gamma,v)\in\CC\times\CC^{n_xn_z+2n_z}$ of  $(\gamma_*,v_*)\in\CC\times\CC^{n_xn_z+2n_z}$}
\BlankLine 
\nl \For{$k = 0,1,2,\dots$}{
\nl Compute new approximation of $\gamma_{k+1}$ from \eqref{eq:Rayleigh_quot}\\
\nl Compute the residual $r_k$ from \eqref{eq:residual_resinv}\\
\nl Compute the correction $\Delta v_k$ from \eqref{eq:Delta_vk} with a preconditioned iterative method.\label{alg_step:resinv_lin_solve}\\
\nl $v_{k+1} \gets v_k - \Delta v_k$\\
\nl $v_{k+1} \gets v_{k+1}/\|v_{k+1}\|$\\
}
\nl $\gamma \gets \gamma_k$, \ \ $v \gets v_k$\\
\end{algorithm}

Large parts of the computational effort in Algorithm \ref{alg:resinv}
\ej{often consists of the solving of} the linear system \eqref{eq:Delta_vk} and
\ej{we present a} method that makes the
computation feasible for \err{large-scale} problems. We use the Schur complement of $M(\sigma)$ with respect to the block $P(\sigma)$,
\begin{align}\label{eq:Schur}
S(\sigma):= Q(\sigma) - C_1 P(\sigma)^{-1}C_2^\trans,
\end{align}
to specialize the computation of \eqref{eq:Delta_vk} for the WEP. The specialization is an important step \jk{in our algorithm} and therefore we present it in the following \jk{form.}
\begin{proposition}[Schur complement for the WEP]\label{prop:Schur}
Let $M(\sigma)$ be as in \eqref{eq:M_FD}, the \er{shift $\sigma\in\CC$,} and let $S(\sigma)$ be the Schur complement \eqref{eq:Schur}. 
Moreover, let $r\in\CC^{n_xn_z + 2n_z}$, and let $r_\text{int}$ be the first $n_xn_z$ elements of $r$ and $r_\text{ext}$ be the last $2n_z$ elements of $r$. 
Then
\begin{align} \label{eq:M_inverse}
M(\sigma)^{-1}r=\begin{pmatrix}
q\\
P(\sigma)^{-1}\left(-C_2^\trans q + r_{\text{ext}}\right)
\end{pmatrix}
\end{align}
where
\begin{align}  \label{eq:Ssigma_solve}
q :=S(\sigma)^{-1} \tilde{r} 
\end{align}
and
\begin{align}  \label{eq:Ssigma_solve_rhs}
\tilde{r} := r_{\text{int}} - C_1 P(\sigma)^{-1}r_{\text{ext}}.
\end{align}
\end{proposition}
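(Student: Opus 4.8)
The plan is to prove the identity by the standard block-elimination (block $LU$ / Schur complement) argument and to verify it by direct substitution. Write $y := M(\sigma)^{-1}r$ and partition $y = (y_{\text{int}}^\trans, y_{\text{ext}}^\trans)^\trans$ conformally with the block structure of $M(\sigma)$ in \eqref{eq:M_FD}, so that $y_{\text{int}}\in\CC^{n_xn_z}$ and $y_{\text{ext}}\in\CC^{2n_z}$. The equation $M(\sigma)y = r$ then splits into the two coupled block equations
\[
Q(\sigma)y_{\text{int}} + C_1 y_{\text{ext}} = r_{\text{int}}, \qquad C_2^\trans y_{\text{int}} + P(\sigma)y_{\text{ext}} = r_{\text{ext}} .
\]

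First I would solve the second equation for $y_{\text{ext}}$. Since $P(\sigma)$ is block diagonal with blocks $P_\pm(\sigma) = R\Lambda_\pm(\sigma)R^{-1}$, it is invertible whenever none of the scalars $s_{\pm,k}(\sigma)+d_0$ vanishes; I would record this as a standing assumption (it is in any case implicit in the hypothesis that $M(\sigma)^{-1}$, and hence the Schur complement of $M(\sigma)$ with respect to $P(\sigma)$, exists). This gives $y_{\text{ext}} = P(\sigma)^{-1}(r_{\text{ext}} - C_2^\trans y_{\text{int}}) = P(\sigma)^{-1}(-C_2^\trans y_{\text{int}} + r_{\text{ext}})$, which is exactly the second component of the claimed formula \eqref{eq:M_inverse} once $y_{\text{int}} = q$ is established.

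Next I would substitute this expression for $y_{\text{ext}}$ into the first block equation and collect the terms multiplying $y_{\text{int}}$, obtaining $\big(Q(\sigma) - C_1 P(\sigma)^{-1}C_2^\trans\big)y_{\text{int}} = r_{\text{int}} - C_1 P(\sigma)^{-1}r_{\text{ext}}$, i.e.\ $S(\sigma)y_{\text{int}} = \tilde r$ with $S(\sigma)$ as in \eqref{eq:Schur} and $\tilde r$ as in \eqref{eq:Ssigma_solve_rhs}. Because $M(\sigma)$ and $P(\sigma)$ are invertible, the Schur complement $S(\sigma)$ is invertible as well (the standard determinant/rank identity for $2\times 2$ block matrices), so $y_{\text{int}} = S(\sigma)^{-1}\tilde r = q$, which is \eqref{eq:Ssigma_solve}. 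Combining the two components yields \eqref{eq:M_inverse}.

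There is essentially no hard step here — the only things to be careful about are the bookkeeping of which block is eliminated and the (mild) invertibility requirements on $P(\sigma)$ and $S(\sigma)$, which I would state explicitly so that \eqref{eq:Ssigma_solve}–\eqref{eq:Ssigma_solve_rhs} are well defined. Optionally, I would also record the equivalent closed form of $M(\sigma)^{-1}$ coming from the block factorization
\[
M(\sigma) = \begin{pmatrix} I & C_1 P(\sigma)^{-1} \\ 0 & I\end{pmatrix}\begin{pmatrix} S(\sigma) & 0 \\ 0 & P(\sigma)\end{pmatrix}\begin{pmatrix} I & 0 \\ P(\sigma)^{-1}C_2^\trans & I\end{pmatrix},
\]
which makes the identity transparent and also explains why the bulk of the later computational cost (Section~\ref{sec:precond}) is concentrated in applying $S(\sigma)^{-1}$, the only factor that is not cheap via the FFT-based action of $P(\sigma)^{-1}$ from Remark~\ref{rem:action_of_P}.
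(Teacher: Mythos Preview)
Your argument is correct: it is the standard block-elimination/Schur-complement derivation, and the optional block-$LU$ factorization you record is a clean way to see it. The paper itself states this proposition without proof (treating it as elementary), so there is nothing to compare against; your write-up simply supplies the routine verification the authors omitted.
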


We \gm{use} Proposition~\ref{prop:Schur} 
\gm{to solve the linear system in}
Step~4 in Algorithm~1.
More precisely, we use a preconditioned iterivative method to solve \eqref{eq:Ssigma_solve},
and FFT to compute the action $P(\sigma)^{-1}$
(as described in Remark~\ref{rem:action_of_P}).
All other operations required for the application proposition have
negligable computational cost.

%

\section{Matrix equation characterization}\label{sec:mateq}
In order to construct a good preconditioner for the linear system \eqref{eq:Ssigma_solve}
we now \jk{formulate} 
it \jk{as a matrix equation.} 
Without loss of generality we express \eqref{eq:Ssigma_solve}
as $S(\sigma)\vec(X)=\vec(C)$\jk{, where $X, C\in\CC^{n_z\times n_x}$}.

Note that $S(\sigma)$ is defined in \eqref{eq:Schur} as
the sum of $Q(\sigma)$ and $-C_1P(\sigma)^{-1}C_2^T$,
where 
$Q(\sigma)$ \er{is} described by \eqref{eq:Q_FD}.
The action of $Q(\sigma)$ can be characterized
with matrix equations.
By direct application of rules for Kronecker products, see e.g., \cite[Section~4.3]{Horn:1991:MATAN2_edit}, \jk{it follows} 
that
\begin{align}\label{eq:QMATEQ}
Q(\sigma)\vec(X)=\vec\left((D_{zz}+2\sigma D_z+\sigma^2 I_{n_z})X + XD_{xx} + K\circ X\right).
\end{align}
\jk{The action of the first two terms of \eqref{eq:QMATEQ} can be identified with a Sylvester operator,
 $\Lop: \CC^{n_z\times n_x}\rightarrow\CC^{n_z\times n_x}$:
\begin{equation}  \label{eq:Ldef}
\Lop(X) := AX + XB,
\end{equation}
and hence the action of $Q(\sigma)$ can be viewed as a generalized Sylvester operator.}
The action corresponding to $S(\sigma)$ can
similarly also be constructed as a generalization 
of the Sylvester operator. We formalize it in the following
result, where we \jk{also introduce} 
an additional free parameter $\bar{k}$.
This parameter is later chosen \jk{in} such a way that the contribution of
the terms corresponding to the Sylvester operator
\gm{
is large.}

\begin{proposition}[Waveguide matrix equation]\label{prop:WMATEQ}
Let $X\in\CC^{n_z\times n_x}$, let $C\in\CC^{n_z\times n_x}$ be a given matrix, and let $S(\sigma)$ be \er{the Schur complement} \eqref{eq:Schur}. Then $\vec(X)$ is a solution to $S(\sigma)\vec(X) = \vec(C)$ if and only if $X$ is a solution to
\gm{
\begin{align}  \label{eq:WMATEQ}
\begin{aligned}
&A X+XB+
(K-\bar{k}\mathbf{1}\mathbf{1}^\trans)\circ X
-P_-(\sigma)^{-1}XE-P_+(\sigma)^{-1}XJ_{n_x}EJ_{n_x} 
= C,
\end{aligned}
\end{align}
}
where 
$A:=D_{zz}+2\sigma D_z+\sigma^2 I_{n_z}+\bar{k}I_{n_z}$, and $B:=D_{xx}$, and $E:=\frac{1}{h_x^2}(d_1 e_1 + d_2 e_2)e_1^\trans$, $d_1$and $d_2$ is given by the discretization, $J_{n_x}$ is the flipped identity, and $\bar{k}$ is a free parameter.
\end{proposition}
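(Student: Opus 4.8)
The plan is to start from Proposition~\ref{prop:Schur}, which already expresses the action of $M(\sigma)^{-1}$ in terms of the Schur complement $S(\sigma)=Q(\sigma)-C_1P(\sigma)^{-1}C_2^\trans$. The equivalence $S(\sigma)\vec(X)=\vec(C) \iff \eqref{eq:WMATEQ}$ then reduces to showing that each of the two pieces of $S(\sigma)$, applied to $\vec(X)$, can be rewritten as the corresponding matrix-equation expression in $X$. So I would split the verification into (i) the $Q(\sigma)$ part and (ii) the $-C_1P(\sigma)^{-1}C_2^\trans$ part, handle them separately, and then recombine, absorbing the free parameter $\bar k$ by adding and subtracting $\bar k\, I_{n_xn_z}\vec(X)$ at the end.

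First I would dispose of the $Q(\sigma)$ part: equation~\eqref{eq:QMATEQ} is already stated in the excerpt and follows from the standard identities $(B^\trans\kron A)\vec(X)=\vec(AXB)$ together with the fact that $\diag(\vect(K))\vec(X)=\vec(K\circ X)$. Writing $D_{zz}+2\sigma D_z+\sigma^2 I_{n_z} = A - \bar k I_{n_z}$ regroups this as $\vec\bigl(AX+XB+(K-\bar k\mathbf 1\mathbf 1^\trans)\circ X\bigr)$, which matches the first line of~\eqref{eq:WMATEQ} exactly (using $\bar k I_{n_z} X = \bar k\,\mathbf 1\mathbf 1^\trans\circ X$ only informally — more cleanly, $\bar k$ enters $A$ on one side and is removed via the Hadamard shift $K-\bar k\mathbf 1\mathbf 1^\trans$ on the other, since $\bar k X = \bar k(\mathbf 1\mathbf 1^\trans)\circ X$).

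The main work — and the main obstacle — is the low-rank term $-C_1 P(\sigma)^{-1}C_2^\trans \vec(X)$. Here I would plug in the explicit block forms: $C_2^\trans$ from~\eqref{eq:C2_FD} picks out columns $1,2$ (scaled by $d_1,d_2$) of $X$ for the "$-$" block and columns $n_x, n_x-1$ for the "$+$" block, i.e. $C_2^\trans\vec(X) = \bigl(\vec(X(d_1e_1+d_2e_2)),\ \vec(X(d_1e_{n_x}+d_2e_{n_x-1}))\bigr)$. Then $P(\sigma)^{-1}=\diag(P_-(\sigma)^{-1},P_+(\sigma)^{-1})$ acts blockwise (each being an $n_z\times n_z$ matrix acting on the respective column vector), and finally $C_1$ from~\eqref{eq:C1_FD} injects the first resulting vector back into column $1$ of an $n_z\times n_x$ matrix and the second into column $n_x$, both scaled by $1/h_x^2$. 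Reassembling, the "$-$" contribution is $\tfrac1{h_x^2}P_-(\sigma)^{-1}X(d_1e_1+d_2e_2)e_1^\trans = P_-(\sigma)^{-1}XE$ with $E$ as defined; the "$+$" contribution is $\tfrac1{h_x^2}P_+(\sigma)^{-1}X(d_1e_{n_x}+d_2e_{n_x-1})e_{n_x}^\trans$. The remaining step is to recognize this last expression as $P_+(\sigma)^{-1}XJ_{n_x}EJ_{n_x}$: since $J_{n_x}e_1=e_{n_x}$, $J_{n_x}e_2=e_{n_x-1}$, and $J_{n_x}^2=I_{n_x}$, one has $J_{n_x}EJ_{n_x}=\tfrac1{h_x^2}(d_1e_{n_x}+d_2e_{n_x-1})e_{n_x}^\trans$, which is exactly the needed pattern. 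Care is needed with the transposition conventions in~\eqref{eq:C1_FD}–\eqref{eq:C2_FD} and with the fact that $P_\pm(\sigma)^{-1}$ appears on the left (row side) of $X$ while the column-selection appears on the right — this bookkeeping of which Kronecker factor acts on which side of $X$ is the only genuinely error-prone part. Once both pieces are in matrix form, subtracting them from~\eqref{eq:QMATEQ} and setting the result equal to $C$ gives precisely~\eqref{eq:WMATEQ}, and since $\vec$ is a bijection the "if and only if" is immediate.
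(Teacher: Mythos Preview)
Your proposal is correct and follows essentially the same approach as the paper: split $S(\sigma)$ into the $Q(\sigma)$ part (handled by \eqref{eq:QMATEQ}) and the low-rank part $C_1P(\sigma)^{-1}C_2^\trans$, then expand the latter using the explicit Kronecker structure of $C_1$, $C_2^\trans$, and $P(\sigma)^{-1}$. The only difference is stylistic: the paper first simplifies the matrix $C_1P(\sigma)^{-1}C_2^\trans$ to a sum of two Kronecker products and then invokes the vec--Kronecker identity, whereas you trace the action on $\vec(X)$ step by step and make the $\bar k$ shift and the $J_{n_x}EJ_{n_x}$ identification explicit, but the substance is identical.
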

\begin{proof}
We have that $S(\sigma)= Q(\sigma)-C_1 P(\sigma)^{-1}C_2^\trans$. The equivalent matrix equation formulation for $Q(\sigma)$ is found \er{apparent from} \eqref{eq:QMATEQ}. The rest follows from the calculation
\begin{align*}
\begin{aligned}
& C_1 P(\sigma)^{-1}C_2^\trans \\
&=
\frac{1}{h_x^2}\begin{pmatrix}
e_1\otimes I_{n_z},\
e_{n_x}\otimes I_{n_z}
\end{pmatrix}
\begin{pmatrix}
P_{-}(\sigma)^{-1} & 0\\
0 & P_{+}(\sigma)^{-1}
\end{pmatrix}
\begin{pmatrix}
d_1 e_{1}^\trans\otimes I_{n_z} +d_2e_{2}^\trans\kron I_{n_z} \\
d_1 e_{n_x}^\trans\otimes I_{n_z} +d_2e_{n_x-1}^\trans\kron I_{n_z}
\end{pmatrix} \\
&=
e_1\left(\frac{d_1}{h_x^2}  e_{1}^\trans +\frac{d_2}{h_x^2} e_{2}^\trans\right)\kron P_{-}(\sigma)^{-1} +
e_{n_x}\left(\frac{d_1}{h_x^2}  e_{n_x}^\trans +\frac{d_2}{h_x^2} e_{n_x-1}^\trans\right)\kron P_{+}(\sigma)^{-1} .
\end{aligned}
\end{align*}
\end{proof}


\section{The Sylvester SMW structure for the WEP}\label{sec:SMW}
\subsection{Sylvester-type SMW-structure}\label{subsec:SMW}
Our computational procedure is based on the explicit
formula for the inverse of a matrix with a  \err{low-rank}
\gm{correction},
the Sherman-Morrison-Woodbury (SMW) formula
\cite[Equation (2.1.4)]{Golub:2013:MATRIX_edit}.
We use the formulation
\begin{equation}  \label{eq:SMW}
   (L+UV^T)^{-1}c=L^{-1}(c-UW^{-1}V^T L^{-1}c)  
\end{equation}
where $U,V\in\CC^{n\times N}$ and
\begin{equation}  \label{eq:Wdef}
  W:=I+V^TL^{-1}U \in\CC^{N\times N}.
\end{equation}
\ejj{In order to apply}
the SMW-formula to equations of the
\jk{form \eqref{eq:WMATEQ}}, 
we need a particular matrix equation version of the
SMW-formula.
The adaption of SMW-formula's to matrix equations 
has been examined previously in the literature
\cite{Damm:2008:ADIPRECONDITIONED_edit,Kuzmanovic:2013:Sherman–Morrison–Woodbury,Richter:1993:EFFICIENT}. 
Our formulation is based on a specialization
of \cite[Lemma~3.1]{Damm:2008:ADIPRECONDITIONED_edit}
\jk{that is set up to} 
minimize the memory requirements
(as we further discuss in Remark~\ref{rem:SMW-variant}).

We select the $L$-matrix in \eqref{eq:SMW} as
the vectorization of a Sylvester operator
\eqref{eq:Ldef}, 
which is invertible if $\eig(A)\cap\eig(-B) = \emptyset$, see e.g., 
\cite[Theorem 4.4.6]{Horn:1991:MATAN2_edit}.
We make this specific choice \ejj{since}
\jk{ the \ejj{solution to the} Sylvester equation \ejj{in our case be computed} efficiently.}
\ejj{More precisely, the} specific structure \ejj{present in our context}
can be exploited, as we further describe in
Section~\ref{subsubsec:fft}.

In our approach we consider \gm{a} rank $N$ 
\gm{correction}
of the Sylvester operator, which 
can be expressed as a 
linear operator $\Pi$ of the form
\begin{equation}  \label{eq:Pidef2}
    \Pi(X):=\sum_{k=1}^N \Wop_k(X)E_k.
  \end{equation} 
In this setting the matrix $W$ in \eqref{eq:Wdef} can be expressed in terms of evaluations of the 
functionals $\Wop_1,\ldots,\Wop_N$.  
This \jk{use of SMW} is formalized in the following result.

\begin{theorem}[Sylvester-type SMW-structure]\label{thm:SMW}
  Let $A\in\CC^{n\times n}$, $B\in\CC^{m\times m}$, and $C\in\CC^{n\times m}$
  and suppose $\eig(A)\cap\eig(-B) = \emptyset$.
  Moreover, 
  let the matrices $E_k\in\CC^{n\times m}$ and linear functionals $\Wop_k : \CC^{n\times m}\rightarrow \CC$ be given for $k=1,2,\dots,N$
  and define $\Pi:\CC^{n\times m}\rightarrow\CC^{n\times m}$ by \eqref{eq:Pidef2}.
Assume that there exists a unique solution to the equation
\begin{align}\label{eq:SMW-structure}
  \Lop(X)+\Pi(X)= 
  C .
\end{align}
where $\Lop$ is the Sylvester operator defined analogous to  \eqref{eq:Ldef}.
Moreover, let 
\begin{subequations}
\begin{eqnarray}
G &:=& \Lop^{-1}(C), \qquad\textrm{ and }\label{eq:SMW_G}\\
F_k &:=& \Lop^{-1}(E_k) \qquad \text{for } k=1,2,\dots,N.\label{eq:SMW_F}
\end{eqnarray}
\end{subequations}
and define 
\begin{equation}  \label{eq:Wdef2}
  W:=
  \begin{pmatrix}
1 + \Wop_1(F_1)& \Wop_1( F_2 ) & \dots & \Wop_1( F_N ) \\
\Wop_2( F_1) & 1 + \Wop_2( F_2 ) & \dots & \Wop_2( F_N ) \\
\vdots & \vdots & \ddots & \vdots \\
\Wop_N( F_1) & \Wop_N( F_2 ) & \dots & 1 + \Wop_N( F_N ) \\
  \end{pmatrix},\;\;
 g:=\begin{pmatrix}
\Wop_1( G ) \\
\Wop_2(G ) \\
\vdots \\
\Wop_N( G )
\end{pmatrix}.
\end{equation}
Then the solution to \eqref{eq:SMW-structure} is given by
\begin{align}\label{eq:SMW-solution}
X = \Lop^{-1}\left( C - \sum_{k=1}^N \alpha_k E_k\right),
\end{align}
where $a^T=(\alpha_1,\ldots,\alpha_N)$ 
is  the unique solution to the system of equations
\begin{equation}  \label{eq:alpha_system}
  W a =g.
\end{equation}
\end{theorem}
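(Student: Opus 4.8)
The plan is to reduce the matrix equation to its vectorized form and apply the classical SMW formula \eqref{eq:SMW}. First I would vectorize everything in sight: set $L := I_m \otimes A + B^T \otimes I_n$ so that $L\vec(X) = \vec(\Lop(X))$, and observe that $L$ is invertible precisely because $\eig(A)\cap\eig(-B)=\emptyset$, by \cite[Theorem 4.4.6]{Horn:1991:MATAN2_edit}. Next I would write the low-rank operator $\Pi$ in factored form: since each $\Wop_k$ is a linear functional on $\CC^{n\times m}$, there is a matrix $\Phi_k\in\CC^{n\times m}$ with $\Wop_k(X) = \langle \Phi_k, X\rangle := \vec(\Phi_k)^T\vec(X)$ (or the appropriate trace pairing), so that $\vec(\Pi(X)) = \sum_k \vec(E_k)\,\vec(\Phi_k)^T\vec(X) = UV^T\vec(X)$ with $U := [\vec(E_1),\dots,\vec(E_N)]$ and $V := [\vec(\Phi_1),\dots,\vec(\Phi_N)]$. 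Thus \eqref{eq:SMW-structure} becomes $(L + UV^T)\vec(X) = \vec(C)$, which is exactly the setting of \eqref{eq:SMW}.

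Then I would unwind \eqref{eq:SMW}--\eqref{eq:Wdef} back into matrix-equation language. The quantity $L^{-1}\vec(C)$ is $\vec(G)$ with $G = \Lop^{-1}(C)$ as in \eqref{eq:SMW_G}, and each column $L^{-1}U e_k$ is $\vec(F_k)$ with $F_k = \Lop^{-1}(E_k)$ as in \eqref{eq:SMW_F}. The entry $(W)_{jk}$ of the Woodbury matrix \eqref{eq:Wdef} is $\delta_{jk} + \vec(\Phi_j)^T\vec(F_k) = \delta_{jk} + \Wop_j(F_k)$, which is precisely the matrix $W$ displayed in \eqref{eq:Wdef2}; similarly the right-hand side vector $V^T L^{-1}\vec(C)$ has $j$-th component $\vec(\Phi_j)^T\vec(G) = \Wop_j(G)$, i.e. the vector $g$ of \eqref{eq:Wdef2}. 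Formula \eqref{eq:SMW} then reads $\vec(X) = L^{-1}\big(\vec(C) - U W^{-1} g\big)$; writing $a := W^{-1}g$ (so $Wa = g$, which is \eqref{eq:alpha_system}) and unvectorizing gives $X = \Lop^{-1}\big(C - \sum_{k=1}^N \alpha_k E_k\big)$, which is \eqref{eq:SMW-solution}.

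A few points need care rather than real work. One must justify that $W$ is invertible so that $a$ is well defined; this follows because the assumed unique solvability of \eqref{eq:SMW-structure} is equivalent to invertibility of $L + UV^T$, and the SMW identity \eqref{eq:SMW} (with its standard proof via the factorization of $L+UV^T$) forces $W = I + V^TL^{-1}U$ to be invertible as well — alternatively, $\det(L+UV^T) = \det(L)\det(W)$ by the Schur-complement/Sylvester determinant identity. The second point is the choice of pairing identifying $\Wop_k$ with a representing matrix $\Phi_k$: any fixed nondegenerate bilinear (or sesquilinear) pairing works and the statement is pairing-independent, so this is only bookkeeping. I expect the main obstacle to be purely expository — keeping the Kronecker/vectorization index conventions consistent so that $\Wop_j(F_k)$ lands in the correct $(j,k)$ slot of $W$ and the transpose in $UV^T$ is oriented correctly — rather than anything mathematically substantive; the proof is essentially "vectorize, invoke \eqref{eq:SMW}, unvectorize."
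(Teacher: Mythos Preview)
Your proposal is correct and follows essentially the same route as the paper: vectorize, represent each functional $\Wop_k$ by a matrix so that $\Pi$ becomes a rank-$N$ outer product $UV^T$, and then apply the SMW identity and translate back. The only cosmetic difference is that the paper packages the ``apply SMW and unvectorize'' step by citing \cite[Lemma~3.1 and Equation~(6)]{Damm:2008:ADIPRECONDITIONED_edit} rather than invoking \eqref{eq:SMW} directly, and it does not spell out the invertibility of $W$ (which you correctly derive from the assumed unique solvability via the Sylvester determinant identity).
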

\begin{proof}
  In order \er{to} invoke
  \cite[Lemma~3.1]{Damm:2008:ADIPRECONDITIONED_edit}
  we note that the linear functionals $\Wop_k$,
  for $W_k\in\CC^{n\times m}$, \gm{can}
  be parametrized as $\Wop_k(X) =\vec\left(W_k\right)^\trans \vec(X)$.
  Moreover, we define the matrices
  $P_1:=[\vec(E_1),\ldots,\vec(E_N)]$, and
  $P_2:=[\vec(W_1),\ldots,\vec(W_N)]^T$.
  The conclusion \jk{\eqref{eq:SMW-solution}-}\eqref{eq:alpha_system}
  follows from direct reformulation of
  \cite[Equation~(6)]{Damm:2008:ADIPRECONDITIONED_edit}.
\end{proof}
\begin{remark}[Variants of Theorem~\ref{thm:SMW}] \label{rem:SMW-variant}
  Note that,
  due to the linearity of $\Lop^{-1}$, 
the solution in \eqref{eq:SMW-solution} can be equivalently expressed as
\begin{align} \label{eq:SMW-solution2}
X = G - \sum_{k=1}^N \alpha_k F_k.
\end{align}
Moreover, since $G,F_1,\dots,F_N$ can be treated as  known,
\ej{$X$ can be computed directly from  
 \eqref{eq:SMW-solution2} without the action of $\Lop^{-1}$}.
 Hence, an approach based on \eqref{eq:SMW-solution2}
requires less computational effort than an
approach based on \eqref{eq:SMW-solution} in general.
However, in our case, \eqref{eq:SMW-solution2}
is not advantageous
\ej{since it requires more memory resources} as we further discuss in 
Section~\ref{subsec:storage}.
\end{remark}


\subsection{SMW-structure approximation of \er{the} waveguide matrix equation}\label{subsec:approx}
We saw in the previous section that
the matrix equation SMW-formula can be applied to
sums of a Sylvester operator and the operator $\Pi$
in \eqref{eq:Pidef2}. Note that any linear matrix-operator
can be expressed 
in the form \eqref{eq:Pidef2},
by selecting
the functionals as $\Wop_k(X)=e_j^TXe_\ell$ and
the matrices $E_k=e_j e_\ell^T$, where
$j=1,\ldots,n$, and $\ell =1,\ldots,m$, and $k=j+(\ell-1)n$
such that $k\in \{1,\ldots,nm\}$ and $N=nm$.
Unfortunately, such a construction is not practical
since Theorem~\ref{thm:SMW}
is not computationally attractive for large values of $N$.

\gm{
The equation \eqref{eq:WMATEQ} can be efficiently solved if the last terms in \eqref{eq:WMATEQ}, i.e,  }
\begin{align}  \label{eq:Rest_WMATEQ}
\begin{aligned}
\Phi(X):=(K-\bar{k}\mathbf{1}\mathbf{1}^\trans)\circ X
-P_-(\sigma)^{-1}XE-P_+(\sigma)^{-1}XJEJ,
\end{aligned}
\end{align}
can be expressed 
as a \err{low-rank} operator $\Pi$ of the form \eqref{eq:Pidef2}. Then 
the solution to \eqref{eq:WMATEQ}
can be directly computed with Theorem~\ref{thm:SMW}.
\gm{
In general, $\Phi$ 
}
can 
 only be expressed
as an operator $\Pi$ of large rank $N$. 
However, there exists some situations
where exact matching can be achieved with $N\ll n_xn_z$,
for instance if the elements of $K$ equals a constant, $\bar k$, except for a few indices. 
In
the continuous formulation, this
corresponds to the wavenumber being constant
in most parts of the domain.

Although, $\Phi$  can in general only be expressed
in the form of \eqref{eq:Pidef2} with a large $N$,
we now 
\gm{introduce}
a \err{low-rank} approximation of $\Phi$, i.e., with $N\ll n_xn_z$.
Our construction exploits the structure in $\Phi$ and allows for a representation of $\Pi$ with both low rank $N$ and 
\gm{structured}
matrices $E_k$. 

We consider approximations in a vector space $\mathcal{V}\subset\CC^{n_z\times n_x}$,
with a basis $V_1,\ldots,V_N$, which is assumed to be orthogonal
with respect to the trace inner product
$\langle X,Y\rangle=\trace(Y^*X)$. We take approximation of $X\in\CC^{n_z\times n_x}$ from
this space and let $\tilde{X}$ be the best approximation
(in the induced trace norm). 
Equivalently we can impose the Galerkin condition on $X$,
\[
   \langle X-\tilde{X},V_k \rangle=0,\quad \textrm{ for  }k=1,\ldots,N,
\]
which leads to the formula $\tilde{X}:=\sum_{k=1}^N\frac{\langle X,V_k\rangle}{\langle V_k,V_k\rangle}V_k$. 
Based on this approximation, we construct $\Pi$ as an
approximation of $\Phi$ by setting
\[
\Pi(X):=\Phi\left(\sum_{k=1}^N\frac{\langle X,V_k\rangle}{\langle V_k,V_k\rangle}V_k\right)=
\sum_{k=1}^N\frac{\langle X,V_k\rangle}{\langle V_k,V_k\rangle}\Phi(V_k).
\]
\gm{ 
If we define $\Wop_k(X):=\frac{\langle X,V_k\rangle}{\langle V_k,V_k\rangle}$ 
and $E_k:=\Phi(V_k)$, then $\Pi$ is of the form \eqref{eq:Pidef2}.
More precisely,} for our structure in \eqref{eq:Rest_WMATEQ} we have
\begin{align}\label{eq:Ek_lm}
E_{k} := (K-\bar{k}\mathbf{1}\mathbf{1}^\trans)\circ V_{k}
-P_-(\sigma)^{-1} V_kE-P_+(\sigma)^{-1} V_kJEJ.
\end{align}
As can be expected from a Galerkin approach,
the approximation is exact for any
$X\in\mathcal{V}$, since by construction $\Phi(V_k)=\Pi(V_k)$, $k=1,\ldots,N$.

In theory, the construction can be done for any appropriate
vector space. For reasons of structure exploitation, we
select $V_k$, $k=1,\ldots,N$, 
as indicator function\jk{s} in rectangular regions, 
as shown in Figure \ref{fig:SMW-grid-for-S}.
We \jk{then} select $N_x$ and $N_z$ intervals in $x$- and $z$-direction
respectively, \jk{hence} 
$N=N_xN_z$.
In this case the matrices $V_{\ell+m(N_z-1)}$,
$\ell=1,\ldots,N_z$ and $m=1,\ldots,N_x$, 
take the value 1 in the corresponding rectangular region and zero outside,
and  $\Wop_{\ell+m(N_z-1)}$ is the functional taking the mean over that region.
More precisely, 
\begin{align*}
[V_{\ell+m(N_z-1)}]_{p,q} &= \begin{cases} 
   1 & \text{if } (p,q) \text{ belongs to the region } (\ell,m)\\
   0 & \text{otherwise}
  \end{cases}\\
\Wop_{\ell+m(N_z-1)}(X) &= \frac{\sum_{p,q} [X\circ [V_{\ell+m(N_z-1)}]]_{p,q}}{\sum_{p,q} [V_{\ell+m(N_z-1)}]_{p,q}}.
\end{align*}
Note that 
\jk{the grid resolution is finer}
near the boundary. This
is done in order to improve the approximation 
\jk{of the DtN-map\err{s}, which \err{are} localized in the boundary region.}
\er{The} localization can be seen in
the structure of the $E$-matrix in 
Proposition~\ref{prop:WMATEQ}.
This choice is supported by 
computational experiments presented in Section~\ref{sec:numerics}.

\begin{remark}[Other approximations]
  Note that the approximation described above
  is only an illustration of an approximation
  procedure and there exist many variations. Apart
  from other \er{Galerkin type} approaches,
  it is also possible to use \er{the}
  rank-revealing procedure proposed in
  \cite[Algorithm~3.2]{Damm:2008:ADIPRECONDITIONED_edit}.
  Another option is to use smoothing as, e.g.,
  in multi-grid methods and
  other domain decomposition methods \cite{Dolean:2015:DOMAINDECOMP}. 
  \gm{In order to use these approaches in the framework here described, 
    \ejj{further focused research would be required.}
    In our setting, }
  the structure of  \eqref{eq:Ek_lm}
  allows us to reduce the memory requirements
  as we describe in the   next section. 
\end{remark}

\begin{figure}[h]
  \begin{center}
    \includegraphics{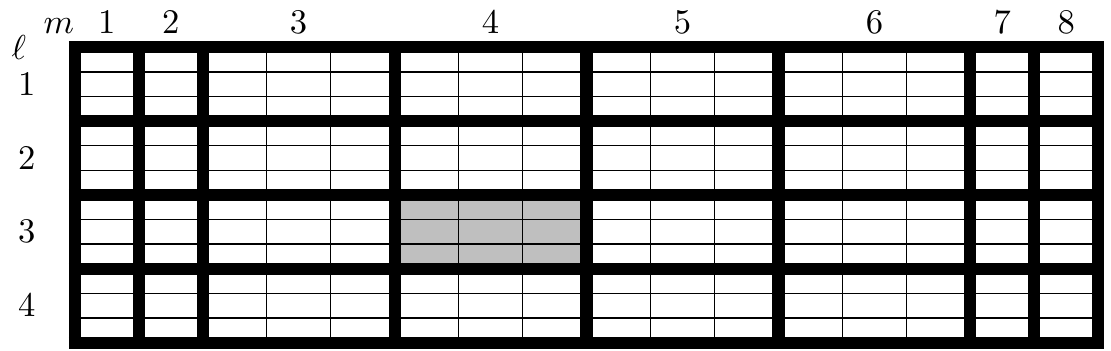}
    \caption{
      The coarse grid shows the regions in which we take basis vectors $V_k$ as constant. The fine grid is an illustration of the underlying grid from the discretization of the PDE.
      For example the weight matrix $V_{3+4(N_z-1)}$ will have value 1 in the light gray area and 0 on all other elements.
      Note that the coarse grid for the SMW approximation (SMW-grid) is finer towards the boundaries in $x$-direction, this is to better capture the effect of the DtN-maps.
      \label{fig:SMW-grid-for-S}
    }
  \end{center}
\end{figure}

\section{Structure exploitation and specialization of  Resinv}\label{sec:precond}

\subsection{SMW-preconditioned Resinv}\label{subsec:smw_resinv}
The application of Resinv to the WEP, described in Section~\ref{sec:resinv},
requires an efficient solution to the linear system $M(\sigma)^{-1}r$
in equation \eqref{eq:Delta_vk}. Since we want to
solve this linear system iteratively, we need an effective
preconditioner.
We use the approximation technique presented in Section~\ref{subsec:approx}
as a preconditioner. As a consequence of
the fact \jk{that} the shift is kept constant in Resinv, the
Sylvester operator defined in \eqref{eq:Ldef},
with \er{$A:=D_{zz}+2\sigma D_z+\sigma^2 I_{n_z}+\bar{k}I_{n_z}$ and $B:=D_{xx}$}, is also constant.
Therefore the matrices $F_1,\ldots,F_N$ in
\eqref{eq:SMW_F}
and the
$W$-matrix in \eqref{eq:Wdef2}
are constant over the iterations. Hence, the $W$-matrix can be precomputed
before initiating Resinv. Moreover,
as mentioned in Remark~\ref{rem:SMW-variant},
this formulation of the SMW-formula, does
not require the storage of the matrices $F_1,\ldots,F_N$,
once  $W$ has been computed. In fact only one $F$-matrix \ejj{needs} to be
stored at a time, since the columns of $W$ can be computed
column-wise. \er{The precomputation can also be trivially parallelized since the columns of $W$ are independent.} This construction is summarized in
Algorithm~\ref{alg:WEP_resinv}. 

An important feature of the algorithm is that
$N$ can be treated as a parameter. Using a large $N$ implies
more computational work in the precomputation phase,
i.e., step 1-4 of Algorithm~\ref{alg:WEP_resinv}, 
since many Sylvester equations need
to \jk{be} solved. However, the quality of the preconditioner
is better 
\gm{and we expect the iterative method to convergence 
in fewer iterations for large $N$. More precisely,} less computation is required for the
iterative solves\footnote{This holds only if $N\ll n_xn_z$ as computation of \eqref{eq:alpha_system} otherwise is a significant part of Step~\ref{alg_step:WEP_resinv_lin_solve}.} in Step~\ref{alg_step:WEP_resinv_lin_solve}.
Hence, $N$ parameterizes a trade-off between computation time in
the initialization and in the iterative solves.
As is illustrated in Section~\ref{sec:numerics},
the best choice of $N$ in terms of total computation
time is a non-trivial problem. 

In order to further improve performance we use a result
regarding residual inverse iteration in \cite{Szyld:2012:LOCAL}.
More precisely, \cite[Theorem~9]{Szyld:2012:LOCAL}
states that the linear solves in Resinv
can be terminated in a way that preserves the
property that the convergence factor is proportional to the
shift-eigenvalue distance. It is proposed to
use the tolerance $\tau$ satisfying
\begin{equation} \label{eq:termination}  
  \|M(\gamma^{k+1})v_k-M(\sigma)\Delta v_k\|\le \tau \|M(\gamma^{k+1})v_k\|
\end{equation}
and $\tau=\mathcal{O}(|\gamma_*-\sigma|)$.
Although, we solve the linear system \eqref{eq:Ssigma_solve} inexactly,
the error propagates linearly to \eqref{eq:M_inverse}, 
and hence the tolerance \eqref{eq:termination} is natural also in our setting\er{.}


\subsection{Storage improvements}\label{subsec:storage}
The particular choice of SMW-formulation is
due to an observation in computational experiments,
that memory is a \ej{restricting} aspect in our approach. 
We have therefore selected the SMW-formulation
in order to reduce memory requirement at the cost
of an increased computation time.
Our approach requires the computation of
the solution to two Sylvester equations
per iteration (equation \eqref{eq:SMW_G} and
\eqref{eq:SMW-solution}).
The solution $X$ in \eqref{eq:SMW-solution}
could be computed by
forming a linear combination of $G, F_1, \ldots, F_N$, as in \eqref{eq:SMW-solution2}
but would require the storage of $F_1,\ldots, F_N$
which are full matrices in general.
In contrast to this, the matrices $E_1,\ldots,E_N$
have a structure that can be exploited.
More precisely, the matrix $C-\sum_{k=1}^N\alpha_k E_k$ required in
\eqref{eq:SMW-solution}
\gm{
can be computed efficiently 
by exploiting  
the structure of $V_k$ and 
$E_k$ defined in \eqref{eq:Ek_lm}.
}


\subsection{Circulant structure exploitation for Sylvester equation}\label{subsubsec:fft}
In order to use the suggested preconditioner we need to solve a number of Sylvester equations, with the Sylveser operator defined by \eqref{eq:Ldef}.
There are many methods available in the literature, both direct methods such as the Bartels-Steward algorithm \cite{Bartels:1972:LYAP} as well as iterative methods. \ej{See} \cite{Simoncini:2016:Computational}
for a recent survey of available methods.
These are general methods for solving the Sylvester equation.
However, our Sylvester equation has a particular structure which can be exploited further.
The approach is based on 
\gm{
the implicit diagonalization of
}
the coefficient matrices,
from which a closed form expression is available \cite{Simoncini:2016:Computational}. Consider the equation $AX + XB = C$ where $A$ and $B$ are diagonalizable. Then the solution $X$ is given by
\begin{align}\label{eq:diag_Sylv}
X = VYW^{-1}, \quad \text{ with } \quad [Y]_{p,q} = \er{\frac{[V^{-1}CW]_{p,q}}{[\Lambda_{A}]_p+[\Lambda_{B}]_q}},
\end{align}
where  $A=V\Lambda_{A} V^{-1}$, and $B=W\Lambda_{B} W^{-1}$.
\gm{In the general case, 
the application
of \eqref{eq:diag_Sylv} 
is expected to be expensive 
and numerically unstable.}
For the waveguide matrix equation \eqref{eq:WMATEQ} 
the matrix $A$ is circulant,
as it stems from the discretization with periodic boundary conditions. 
\gm{In particular}
it is diagonalized by the Fourier matrix \er{\cite[Theorem 4.8.2]{Golub:2013:MATRIX_edit}} whose action can be 
\gm{computed}
by FFT; the eigenvalues are also readily available in $\OOO(n_z\log(n_z))$ operations using FFT \cite{Golub:2013:MATRIX_edit}. The other matrix $B=D_{xx}$ is well studied and has both known eigenvalues and eigenvectors, 
the action of the latter can be 
\gm{computed}
in an efficient and stable way using the relation between Sine-/Cosine-transforms and FFT \cite[Lemma 6.1]{Demmel:1997:NLA_edit} \cite[Section~4.8]{Golub:2013:MATRIX_edit}.
The solution to the Sylvester operator in  \eqref{eq:Ldef}, i.e., 
\begin{align}\label{eq:syl3}
(D_{zz}+2\sigma D_z+(\sigma^2+\bar k) I_{n_z})X + XD_{xx}  = C,
\end{align}
can hence be \er{computed} by using \eqref{eq:diag_Sylv} since the 
action of $V$, $V^{-1}$, $W$, and $W^{-1}$ can be computed efficiently
and accurately using FFT, and the diagonals of $\Lambda_A$ and $\Lambda_B$ are 
available.
This exploitation of FFT leads to a
computation complexity
$\OOO(n_xn_z\log(n_xn_z))$
for the inversion of \eqref{eq:syl3},
cf. \cite[Section~6.7]{Demmel:1997:NLA_edit}.

\begin{algorithm} 
\caption{Resinv for WEP with preconditioned  Schur reformulation}\label{alg:WEP_resinv}
\SetKwInOut{Input}{input}\SetKwInOut{Output}{output}
\Input{Initial guess of the eigenpair $(\gamma_0,v_0)\in\CC\times\CC^{n_xn_z+2n_z}$, with $\norm{v_0} = 1$}
\Output{An approximation $(\gamma,v)\in\CC\times\CC^{n_xn_z+2n_z}$ of  $(\gamma_*,v_*)\in\CC\times\CC^{n_xn_z+2n_z}$}
\BlankLine
\nl \For{$k=1,2,\dots,N$}{
\nl Compute $F_k$ from \eqref{eq:SMW_F} with $E_k$ from \eqref{eq:Ek_lm}\\
\nl Compute the $k$-th column of $W$ as described in \eqref{eq:Wdef2}
}
\nl LU-factorize $W$\\
\nl \For{$k = 0,1,2,\dots$}{
\nl Compute new approximation of $\gamma_{k+1}$ from \eqref{eq:Rayleigh_quot}\\
\nl Compute the residual $r_k$ from \eqref{eq:residual_resinv}\\
\nl Compute $\tilde{r}_k$ from $r_k$ with \eqref{eq:Ssigma_solve_rhs}, using the sparsity of $C_1$ and the structure of $P(\sigma)^{-1}$ according to Remark \ref{rem:action_of_P}\\
\nl Compute $q$ from the linear system \eqref{eq:Ssigma_solve} with a preconditioned  iterative solver, where the preconditioner is applied to a vector $c$ as: \label{alg_step:WEP_resinv_lin_solve}
\begin{itemize}
\item Set $C$ such that $\vec(C)=c$ and compute $G$ from \eqref{eq:SMW_G}. Use $G$\\to compute $g$ from \eqref{eq:Wdef2}
\item Compute $\{\alpha_k\}_{k=1}^N$ by solving the linear system \eqref{eq:alpha_system} with the \\pre-factorized matrix $W$
\item Form the right hand side of \eqref{eq:SMW-solution} and solve the Sylvester equation. Vectorize the solution matrix $X$
\end{itemize}
\nl Compute the correction $\Delta v_k$ from $q$ using \eqref{eq:M_inverse}\\
\nl $v_{k+1} \gets v_k - \Delta v_k$\\
\nl $v_{k+1} \gets v_{k+1}/\|v_{k+1}\|$\\
}
\nl $\gamma \gets \gamma_k$, \ \ $v \gets v_k$\\
\end{algorithm}

\section{Numerical simulations}\label{sec:numerics}

In order to illustrate properties of our approach,
we now show the result of simulations  carried out 
in \matlab\ on a desktop computer.\footnote{Intel quad-core i5-5250U CPU 1.60 
GHz $\times$ $4$, with 16 GB RAM
using
\matlab\ 2015a (8.5.0.197613).}
Source code for the simulations are provided online to improve
reproducability.\footnote{URL: \url{https://www.math.kth.se/~eringh/software/wep/wep_code}}
\er{We use the waveguide illustrated in Figure \ref{fig:wg_challenge}a, and also described in \cite[Section~5.2]{Jarlebring:2015:WTIARTR}.}
This waveguide has many eigenvalues, oscillatory
eigenfunctions, and large discontinuity in the wavenumber,
which is constructed to be representative of a realistic situation. In the simulations we set the free parameter $\bar{k}$, as introduced in the waveguide matrix equation \eqref{eq:WMATEQ}, to $\bar{k} = \mean(K)$. Moreover, 
the size of the problem is
\ejj{denoted} $n$ and
defined as $n:=n_x n_z + 2n_z$, and the parametrization of the preconditioner, $N$, is defined by $N:=N_xN_z$. For implementation convenience we select $n_x = n_z +4$ and $N_x = N_z+4$.

We first illustrate the quality of the preconditioner (without incorporation into Resinv).
The relative error as function of GMRES-iteration is visualized in
Figure~\ref{fig:gmres_iteration_random_rhs}. We clearly see that the required
number of iterations decreases with $N$, which is expected since
the SMW-approximation error is smaller for larger $N$. 
Moreover, we see that a small $N$ normally generates a long transient phase.
The advantage of selecting a finer grid close to the boundary
as shown in Figure~\ref{fig:SMW-grid-for-S} is clear from the fact
that the convergence in
Figure~\ref{fig:gmres_iteration_random_rhs}a
is faster than the convergence in 
Figure~\ref{fig:gmres_iteration_random_rhs}b.
\begin{figure}[h]
  \begin{center}
    \subfigure[With the SMW-grid from Figure~\ref{fig:SMW-grid-for-S}, where $N_x=N_z+4$.]
    {\includegraphics{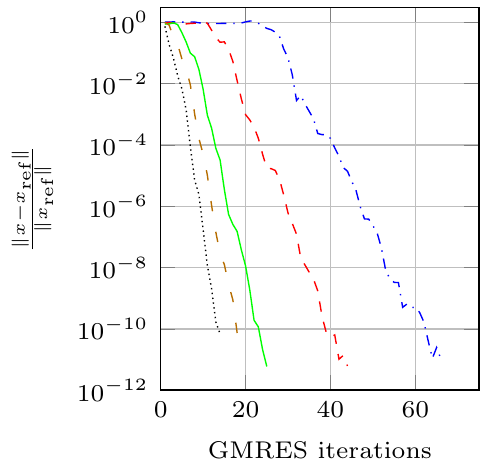}}\;\;\;\;\;%
    \subfigure[With a uniform coarser grid (cf. Figure~\ref{fig:SMW-grid-for-S}). Note that here $n_x=n_z$, and $N_x=N_z$.]
    {\includegraphics{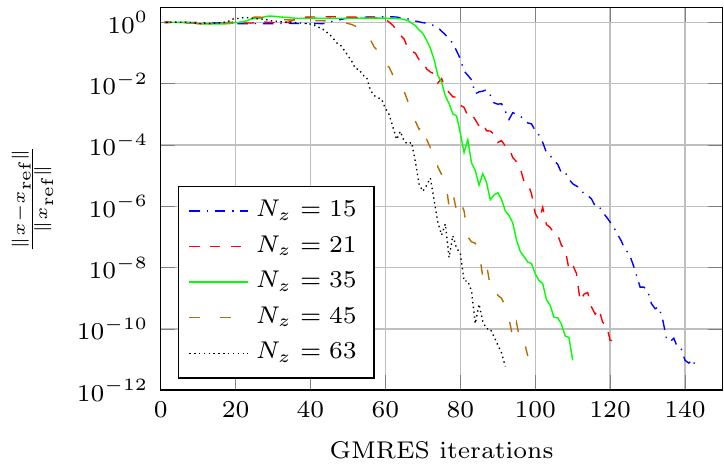}}
    \caption{
      GMRES convergence for solving $S(\sigma)x = c$ for different coarse grids in the SMW-approximation. The discretization is $n_z = 945$ and the error is measured as the relative error compared to a reference solution $x_\text{ref}$.
      \label{fig:gmres_iteration_random_rhs} 
    }
  \end{center}
\end{figure}

For the remaining simulations we measure the error of the approximation 
with an estimate of the relative residual norm
{\small \begin{align}\label{eq:rel_res_norm}
R(v,\gamma) = \frac{\|M(\gamma)v\|_2}{\sum_{k=0}^2 |\gamma|^k \|A_k\|_1 + \|C_1\|_1 + \|C_2^\trans\|_1+2|d_0| + \sum_{k=-p}^p(|s_{+,k}(\gamma)|+|s_{-,k}(\gamma)|)}
\end{align}}
analogous to estimates for other NEPs
\cite{Liao:2006:SOLVING,Higham:2008:BACKWARD}.
Algorithm~\ref{alg:WEP_resinv} is applied to this benchmark problem,
and the error is visualized in Figure~\ref{fig:resinv_error_and_solution}a.
We use $\sigma=-0.5-0.4i$, such that the algorithm converges
to the eigenvalue $\gamma\approx  -0.523 - 0.375i$. 
As expected from the GMRES-termination criteria \eqref{eq:termination},
we maintain linear convergence and a convergence factor
which is in the order of magnitude of the shift-eigenvalue distance.
The corresponding computed eigenfunction is visualized in Figure~\ref{fig:resinv_error_and_solution}b.

\begin{figure}[h]
  \begin{center}
    \subfigure[Error in relative residual norm \eqref{eq:rel_res_norm}.
    \ej{The prediction corresponds to convergence factor $|\gamma-\sigma|$.}]
{\includegraphics{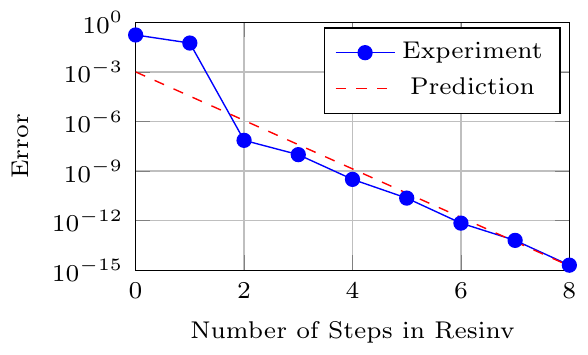}}
  \subfigure[Absolute value of the eigenfunction.]
    {\includegraphics{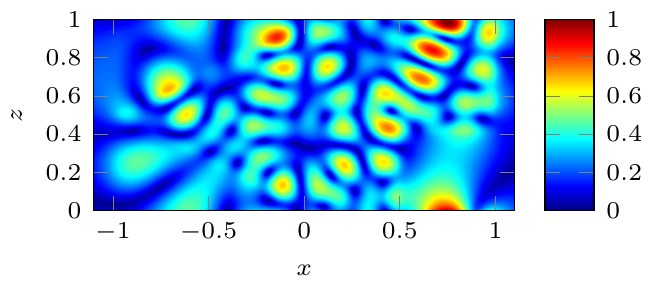}}

    \caption{
      Illustration of convergence of Resinv and the corresponding eigenfunction to \ej{eigenvalue} $\gamma\approx-0.523-0.375i$. In this simulation \ej{we use} the shift $\sigma=-0.5-0.4i$, the discretization $n_z = 2835$, and the preconditioner parameter $N_z = 21$.
      \label{fig:resinv_error_and_solution}
    }
  \end{center}
\end{figure}

Figure~\ref{fig:gmres_iter_count} shows
the number of required GMRES-iteration\jk{s for} 
one iteration of Resinv.
As expected from the approximation
properties of the preconditioner, \jk{a larger value $N$} 
implies fewer iterations. We observe an increase in the number of
required GMRES-iterations with increasing problem size. The
increase is however rather slow.

\jk{The choice of $N$ for parameterizing the trade-off between precomputation time and times in the solves,}
pointed \jk{out} in Section~\ref{subsec:smw_resinv}, is illustrated in
terms computation-time
in Figure~\ref{fig:resinv_times}a.
For this particular problem (and computing environment)
the best choice is  $N_z=35$. Note however, that several other choices such as $N_z=45$ and $N_z=63$ are almost as good. 
In profiling illustration in
Figure~\ref{fig:resinv_times}b, we see, as expected,
that increasing $N$, shifts computational effort into
to the precalculation phase, and that the
computational effort required for the
precalculation and \ejj{the other parts of the algorithm}
are of the same order of magnitude.

\begin{figure}[h]
  \begin{center}
  {\includegraphics{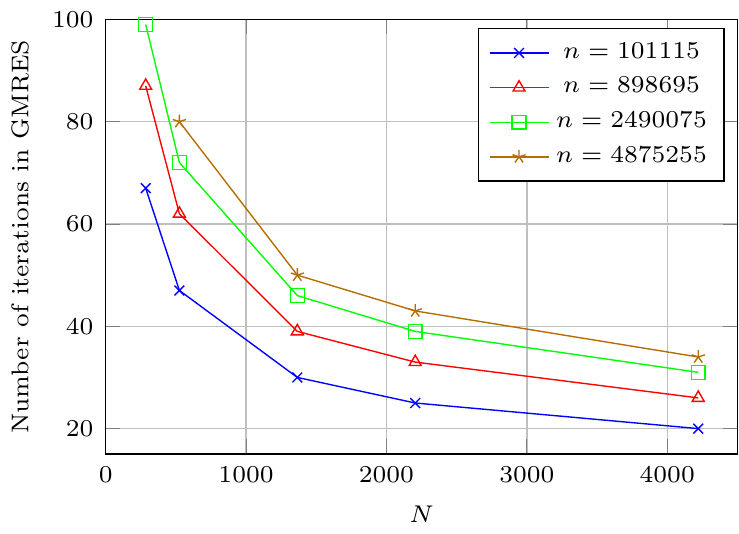}}
    \caption{Illustration of the increase in GMRES iterations. Evaluations for SMW-grid parameters $N_x=N_z+4$ and $N_z\in \{15, 21, 35, 45, 63\}$ where $N=N_xN_z$.
      \label{fig:gmres_iter_count}
    }
  \end{center}
\end{figure}

\begin{figure}[h]
  \begin{center}
    \subfigure[Time in seconds for solving the problem.]
  {\includegraphics{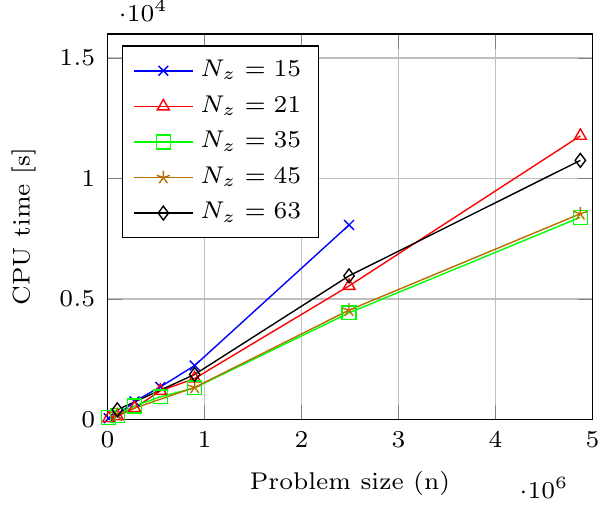}}\;\;\;\;\;%
    \subfigure[Percentage of time spent on precalculating the (SMW) $W$-matrix from \eqref{eq:Wdef2}.]
  {\includegraphics{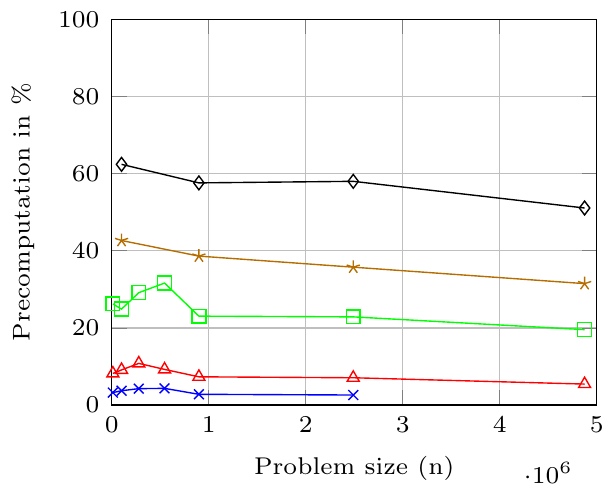}}
    %
    \caption{
      Time for the complete method described in Algorithm \ref{alg:WEP_resinv}, as a function of problem size. Problem size is the total size of the problem as presented in \eqref{eq:WEP}, that is $n_xn_z+2n_z$, where the GMRES-tolerance is select such that linear system is solved to full precision. This is plotted for
      \ejj{different $N$ values} 
      $N=N_x N_z$, and $N_x=N_z+4$.
      \label{fig:resinv_times}
    }
  \end{center}
\end{figure}

We now illustrate our experiences with the capabilities
of the full approach. 
We carry out simulations with the iterative methods GMRES and BiCGstab using the proposed preconditioner. 
In addition to this we compare with \ejj{GMRES combined with a preconditioner based on incomplete LU-factorization (ILU) \cite[Chapter~10]{Saad:1996:LINSYS_edit}}.
The computation times are given in Table~\ref{tbl:cpu-time}. 
For very large problems, GMRES is
not advantageous due to memory requirements. 
Dealing with this using restarted GMRES is not competitive due to long transient phases,
similar to those observed
in Figure~\ref{fig:gmres_iteration_random_rhs}.
GMRES requires more memory than BiCGStab, and the largest problem we manage to solve is computed with BiCGstab.\footnote{This size is larger than $28\cdot10^6\times28\cdot10^6$.}
However, GMRES was in general slightly faster in simulations with enough memory.
For large problems with GMRES, the size of the Krylov space needs to be carefully adjusted to stay within the available RAM. For most simulations the maximum size of the Krylov space is $100$ vectors, but for $n\approx 16 \cdot10^6$ only $30$ vectors is used  for $N_z=35$ and $N_z=21$. 
However, for the case $N_z=15$ no size of the Krylov subspace is found that is sufficiently large to reach convergence, but small enough to stay within the available RAM. Unfortunately,
in our experiments ILU required considerable memory resources
and we were not able to use it to solve very large problems.

A FEM-discretization of this problem was presented in \cite{Jarlebring:2015:WTIARTR}.
Our preconditioner can also be applied to solve the discretization with FEM,
by using the FD-preconditioner. \er{The observed convergence is similar to the previously observed convergence in the FD-case (cf. Figure~\ref{fig:gmres_iteration_random_rhs}a).
For instance, when our method is applied to a discretization
with $n_z=945$ and $N_z=21$, GMRES for the FD-problem needs $44$ iterations
and GMRES for the FEM-problem need $44$ iterations as well.}

\begin{table}[h]
  \begin{center}
  {
    \begin{tabular}{r||c|c|c||c|c|c||c|c}
           & \multicolumn{3}{c||}{GMRES} & \multicolumn{3}{c||}{BiCGStab} & \multicolumn{2}{c}{ILU (GMRES)} \\\hline 
  \diagbox[width=1.6cm, height=0.8cm]{~~~$n$}{\raisebox{-0.3ex}{$p$}}
  & $15$ & $21$ & $35$ & $15$ & $21$ & $35$ & $10^{-5}$ & $10^{-6}$\\
      \hline      \hline
      $101\,115$     & $63$ & $53$ & $79$ & $85$ & $66$ & $91$ & $78$ & $74$ \\
      $278\,775$     & $201$ & $169$ & $241$ & $324$ & $216$ & $246$ & $504$ & $474$ \\
      $898\,695$     & $542$ & $405$ & $509$ & $757$ & $520$ & $570$ & $\star$ & $\star$ \\
      $2\,490\,075$  & $1\,674$ & $1\,129$ & $1\,652$ & $2\,391$ & $1\,668$ & $1\,796$ & $\star$ & $\star$ \\
      $4\,875\,255$  & $3\,530$ & $2\,435$ & $3\,218$ & $4\,653$ & $3\,199$ & $3\,276$ & $\star$ & $\star$ \\
      $8\,054\,235$  & $6\,732$ & $4\,675$ & $6\,658$ & $10\,808$ & $7\,001$ & $7\,528$ & $\star$ & $\star$\\
      $16\,793\,595$ & $\star$ & $11\,171$ & $17\,116$ & $23\,047$ & $14\,865$ & $15\,983$ & $\star$ & $\star$
    \end{tabular}
    }
  \caption{Illustration of CPU-time in seconds for a set of different methods. The preconditioner parameter $p$ is $p = N_z$ for the proposed preconditioner, \ejj{
$N=N_xN_z$   
 where $N_x=N_z+4$;}
and for ILU $p=\epsilon$ the dropping tolerance. The symbol $\star$ denotes simulations which could not be executed due to insufficient RAM.
    \label{tbl:cpu-time}
    }
  \end{center}
\end{table}

\section{Concluding remarks and outlook}\label{sec:conclusion}
We have presented a new computational procedure specialized
for the WEP \eqref{eq:pde}, based on combining
the method for NEPs called Resinv and
an iterative method for linear systems.
The preconditioner for the iterative method
is based on an approximation leading
to a structure which can be exploited
with a matrix-equation version of SMW. 

There are many options for constructing the SMW approximation.
For instance, the space
used in the Galerkin approximation could
be selected in a number of ways.
Such a construction would necessarily need
to use 
\gm{sparsity or other matrix structures in order to solve \err{large-scale} problems.}

We have focused on one particular method
for NEPs: Resinv. One of the crucial features is
that a linear system corresponding to $M(\sigma)$
needs to be solved many times (for
a constant shift). The Resinv method
is not the only method that uses
the solution to many linear systems with
a fixed shift. This is also the case
for the
nonlinear Arnoldi method \cite{Voss:2004:ARNOLDI}
and the tensor infinite Arnoldi method
\cite{Jarlebring:2015:WTIARTR}. However,
inexact solves in Arnoldi-type methods are sometimes
problematic \cite{Simoncini:2003:INEXACTKRYLOV}, and further research would be
required in order to reliably and efficiently
use our preconditioner for these methods.
%

Although our approximation is justified with a
Galerkin approach, we have not provided any theoretical
convergence analysis.
The application of standard proof-techniques
for such an analysis, e.g., involving eigenvalues
and spectral condition numbers have 
not lead to a clear characterization of the error. 
Therefore, we believe that a convergence
analysis would require use of the regularity
of the eigenfunction, similiar to what
is used in multi-grid methods \cite{Dolean:2015:DOMAINDECOMP},
which is certainly beyond the scope of this paper.

Finally, we wish to point out that several
results in this paper may be of interest also
for other problems and other NEPs.
Most importantly, many NEPs arise naturally
from artificial boundary conditions.
Most artificial boundary conditions
has freedom regarding selection of boundary.
We can therefore select a rectangular
domain, i.e., similar
to the framework considered in this paper.

\section*{Acknowledgment}
The authors wish to thank Per Enqvist (KTH) and
Tobias Damm (TU Kaiserslautern) for comments and discussions
in the early developments of this result.
This research is supported by the
Swedish research council under Grant No. 621-2013-4640.
\bibliographystyle{plain}
\bibliography{eliasbib,misc}

\end{document}